\newtheorem{thm}{Theorem} 
\newtheorem{theorem}{Theorem}
\newtheorem*{theorem*}{Theorem}
\newtheorem{lemma}[thm]{Lemma}
\theoremstyle{definition}
\newtheorem{remark}[thm]{Remark}
\theoremstyle{definition}
\theoremstyle{definition}
\newtheorem{definition}[thm]{Definition}
\numberwithin{equation}{section}
\def\R{{\mathbb R}}
\def\E{{\mathbb E\,}}
\def\Nu{{\mathcal V}}
\def\<{\langle}
\def\>{\rangle}
\def \ll {{\lambda}}
\begin{document}

\title{Scalar product and distance in barycentric coordinates}

%\title{Markov type $\bold 2$ with constant $\bold 1$}

\begin{abstract}
We give a formula for the scalar product and the distance 
in generalized barycentric coordinates for Euclidean and 
Pseudo-Euclidean spaces. 

%This result allows to approach 
%various geometric questions from the distance geometry point of view. 
%We discuss this this formula 
%connections with other

\end{abstract}
\keywords{}
\subjclass[2010]{51F99}

\author{Vladimir Zolotov}
\address[Vladimir Zolotov]{Online poker player.}
\email[Vladimir Zolotov]{lemiranoitz@gmail.com}

\maketitle
% ----------------------------------------------------------------

\section{Difference from MO post}
There is a MathOverflow post \cite{MO433272} giving the same formula. 
The difference is that in the present paper,
we consider generalized barycentric coordinates
instead of the usual barycentric coordinates.
This results in a more complex proof. 
The discussion of connections to other results and
possible applications is also 
extended see Section \ref{RelApp}.

\section{Fixed objects}
Suppose we have points $x_1,\dots,x_n$ in an $m$-dimensional 
Euclidean (or Pseudo-Euclidean) space $(M, \<\cdot,\cdot\>)$ 
such that the affine span of $x_1,\dots,x_n$ is the whole $M$. 
Let's fix those $(M, \<\cdot,\cdot\>)$ and $x_1,\dots,x_n$ 
for the whole paper to avoid reintroducing 
them in every single statement.

For a vector $v \in M$ will use the notation $\vert \vert v \vert \vert^2 
:= \< v , v\>$. Note that in the case when $M$ is a 
Pseudo-Euclidean space
$\vert \vert v \vert \vert^2$ can be negative. 

We denote by $D$ the distance matrix for $x_1,\dots,x_n$, 
in other words, $D_{ij} = \vert \vert x_i - x_j \vert \vert^2$.

%In the following we consider generalized barycentric coordinates with respect to those points $x_1,\dots,x_n$. 
% for the rest of the section 
\section{Generalized barycentric coordinates}
\begin{definition}
For a point $p \in M$, we say that $a_1,\dots,a_n  \in \R$ are \textit{ generalized non-normalized barycentric coordinates} of $p$ iff 
$$a_1 + \dots + a_n \neq 0$$
and 
$$(a_1 + \dots + a_n)p = a_1 x_1 + \dots + a_n x_n.$$
We say that $a_1,\dots,a_n$ are \textit{ generalized absolute (or normalized) barycentric coordinates} of $p$ iff
$$a_1 + \dots + a_n = 1$$
and 
$$p = a_1 x_1 + \dots + a_n x_n.$$
\end{definition}

\begin{remark}
The world "generalized" means that $x_1,\dots,x_n$ do not have
to be in a general position. 
If they are in a general position 
then that's just the usual barycentric coordinates. 
In this case for the fixed point $p$ absolute barycentric
 coordinates are unique  
and non-normalized barycentric coordinates are
 unique up to a non-zero scaling factor. 
\end{remark}

Generalized barycentric coordinates have applications in finite
element methods, computer graphics, and the localization problem
for sensor networks.
For more information on generalized barycentric
coordinates see a survey \cite{floater_2015}. 

\section{Generalized displacement vector}
\begin{definition}
For a vector $v \in M$, we say that a vector
 $\bar v = (\bar v_1,\dots, \bar v_n) \in \mathbb{R}^n$ 
 is a \textit{generalized displacement vector} iff
there exist a point $p$ with generalized absolute  
barycentric coordinates $a_1,\dots,a_n$ 
such that the point $(v + p)$
has generalized absolute barycentric 
coordinates $a_1 + \bar  v_1, \dots, a_n + \bar  v_n$.
\end{definition}

\begin{remark}
Using that for generalized absolute barycentric 
coordinates $a_1,\dots,a_n$ for a point $p$ we have 
$$\sum_{i = 1}^{n}a_i = 1$$
we get that for any generalized displacement 
vector $\bar v = (\bar v_1,\dots, \bar v_n)$ we have
$$\sum_{i = 1}^{n}\bar v_i = 0.$$
\end{remark}

\begin{remark}
If $x_1,\dots,x_n$ are in general position (so our coordinates are not generalized) 
then for any $v$, there is a unique displacement vector $\bar v$.
Otherwise, there are multiple generalized displacement 
vectors for a single vector $v$.
\end{remark}

\section{The formula for the scalar product (and the distance)}
We remind that $D$ denotes the distance matrix 
for $x_1,\dots,x_n$ in other words 
$D_{ij} = \vert \vert x_i - x_j \vert \vert^2$.
\begin{theorem} \label{Thm}
Suppose we have vectors $u, v \in M$ and $\bar u$, $\bar v$
 are their generalized displacement vectors. Then 
\begin{equation} \<u, v\> = \bar u^{T} 
\left(-\tfrac{1}{2}D\right)\bar v.
  \label{BSP} \tag{BSP} \end{equation}

\end{theorem}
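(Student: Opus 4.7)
The plan is to reduce \eqref{BSP} to a clean algebraic identity by first writing $u$ and $v$ as ``formal zero-sum combinations'' of the $x_i$, and then applying the polarization identity.

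First I would unpack the definition of a generalized displacement vector. If $\bar u$ is one for $u$, then there are $a_1,\dots,a_n$ with $\sum_i a_i = 1$, a point $p = \sum_i a_i x_i$, and $u + p = \sum_i (a_i + \bar u_i) x_i$. Subtracting gives
\begin{equation*}
u \;=\; \sum_{i=1}^n \bar u_i \, x_i,
\end{equation*}
which is well-defined as a vector precisely because $\sum_i \bar u_i = 0$ (so the right-hand side is origin-independent). Analogously $v = \sum_j \bar v_j\, x_j$. Although in the generalized setting $\bar u$ is not unique, \emph{every} valid choice satisfies this identity plus the zero-sum condition, and these are the only two properties the rest of the argument will use.

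Next I would expand bilinearly and substitute the polarization identity $\< x_i, x_j\> = \tfrac{1}{2}\bigl(\vert\vert x_i\vert\vert^2 + \vert\vert x_j\vert\vert^2 - \vert\vert x_i - x_j\vert\vert^2\bigr)$, which is a purely algebraic consequence of bilinearity and symmetry of $\<\cdot,\cdot\>$ and therefore remains valid in the pseudo-Euclidean setting. This yields
\begin{equation*}
\< u, v\> \;=\; \sum_{i,j} \bar u_i \bar v_j \< x_i, x_j\> \;=\; \tfrac{1}{2}\sum_{i,j}\bar u_i \bar v_j\bigl(\vert\vert x_i\vert\vert^2 + \vert\vert x_j\vert\vert^2\bigr) \;-\; \tfrac{1}{2}\sum_{i,j}\bar u_i \bar v_j D_{ij}.
\end{equation*}

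Finally, the two ``diagonal'' sums vanish because of the zero-sum property: for instance $\sum_{i,j}\bar u_i \bar v_j \vert\vert x_i\vert\vert^2 = \bigl(\sum_i \bar u_i \vert\vert x_i\vert\vert^2\bigr)\bigl(\sum_j \bar v_j\bigr) = 0$, and symmetrically for the $\vert\vert x_j\vert\vert^2$ term. What remains is exactly $\bar u^T(-\tfrac{1}{2}D)\bar v$. The only genuinely delicate point is the very first step: ensuring that the formula is insensitive to the non-uniqueness of generalized displacement vectors. This is automatic, since the derivation invokes only the two universal properties shared by every valid displacement vector—so no real obstacle appears, but it is the place that requires care and distinguishes the generalized case from the classical one.
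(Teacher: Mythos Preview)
Your argument is correct. The key observation $u=\sum_i \bar u_i x_i$ (with $\sum_i \bar u_i=0$) follows immediately from subtracting the two barycentric representations, and once you have that, bilinearity plus the polarization identity $\langle x_i,x_j\rangle=\tfrac12(\Vert x_i\Vert^2+\Vert x_j\Vert^2-D_{ij})$ and the zero-sum cancellation finish the job cleanly. The only origin-dependent quantities, $\Vert x_i\Vert^2$, disappear exactly as you say.

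This is, however, a genuinely different and considerably shorter route than the paper's. The paper never isolates the identity $u=\sum_i \bar u_i x_i$; instead it keeps the four endpoint vectors $\Lambda^b,\Lambda^e,\Nu^b,\Nu^e$ (absolute barycentric coordinates of $u_b,u_e,v_b,v_e$), expands $(\Lambda^e-\Lambda^b)^T D(\Nu^e-\Nu^b)$ into four terms, and applies a bias--variance style Lemma (Lemma~\ref{L2}) to each term so that the ``variance'' pieces cancel pairwise, leaving the four-point polarization $-\tfrac12(\Vert u_e-v_e\Vert^2+\Vert u_b-v_b\Vert^2-\Vert u_e-v_b\Vert^2-\Vert u_b-v_e\Vert^2)=\langle u,v\rangle$. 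What the paper's approach buys is that every intermediate quantity is a squared distance between two of the $x_i$ or between barycenters---no $\Vert x_i\Vert^2$ ever appears---and the auxiliary Lemmas~\ref{L1}--\ref{L2} have standalone probabilistic interpretations. What your approach buys is brevity: you bypass both lemmas entirely and reach \eqref{BSP} in three lines, at the harmless cost of letting origin-dependent terms appear and then cancel.
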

\begin{remark}
A trivial corollary is that  
$\vert \vert u \vert \vert^2 = \bar u^{T} \left(-\tfrac{1}{2}D\right)\bar u.$
\end{remark}
%\begin{remark}
%For the case if $x_1,\dots,x_n$ are in general position there is a simpler proof here \cite{MO433272}.
%\end{remark}

Before giving the proof of Theorem \ref{Thm} we establish 
a couple of lemmas providing more structure for the argument.
 
\begin{lemma} \label{L1}
Let $\ll_1,\dots,\ll_n \ge 0$ be such that 
$ \sum_{i = 0}^{n}\ll_i = 1$ and $y \in M$ then 
$$\sum_{i = 1}^{n} \ll_i \vert \vert y - x_i \vert \vert^2 =
 \vert \vert y - \sum_{i = 1}^n \ll_i x_i \vert \vert^2
 + \sum_{i = 1}^{n} \ll_i 
 \vert \vert x_i - \sum_{j = 1}^{n} \ll_j  x_j  \vert \vert^2.$$
\end{lemma}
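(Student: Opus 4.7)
The plan is to reduce this to the classical parallel–axis (or variance decomposition) identity by expanding around the weighted centroid $\bar x := \sum_{i=1}^n \lambda_i x_i$. The identity we want to prove is symmetric in the same way that $\mathbb{E}[(Y-X)^2] = (Y-\mathbb{E}[X])^2 + \mathrm{Var}(X)$ is symmetric, and the proof will have the same flavor. Note that although the lemma assumes $\lambda_i \ge 0$, the identity is purely algebraic and only uses $\sum \lambda_i = 1$; the bilinear form being pseudo-Euclidean rather than Euclidean is irrelevant, since everything works for any symmetric bilinear form.

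First I would write $y - x_i = (y - \bar x) + (\bar x - x_i)$ and expand $\|y - x_i\|^2$ using bilinearity of $\langle \cdot , \cdot\rangle$, obtaining
\[
\|y - x_i\|^2 = \|y - \bar x\|^2 + 2\langle y - \bar x,\ \bar x - x_i\rangle + \|\bar x - x_i\|^2.
\]
Then I would multiply by $\lambda_i$ and sum over $i$. The first term contributes $\|y-\bar x\|^2$ because $\sum_i \lambda_i = 1$. The last term contributes exactly $\sum_i \lambda_i \|x_i - \bar x\|^2$, which is the second summand on the right-hand side of the lemma.

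The key step is that the cross term vanishes: by linearity,
\[
\sum_{i=1}^n \lambda_i \langle y - \bar x,\ \bar x - x_i\rangle = \Bigl\langle y - \bar x,\ \bar x - \sum_{i=1}^n \lambda_i x_i\Bigr\rangle = \langle y - \bar x,\ 0\rangle = 0,
\]
using the defining property $\bar x = \sum_i \lambda_i x_i$. Combining the three pieces gives the claim.

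There is really no significant obstacle here: the only thing to be careful about is that the argument uses nothing beyond bilinearity and symmetry of $\langle \cdot, \cdot\rangle$, so it goes through verbatim in the pseudo-Euclidean setting even though $\|\cdot\|^2$ may be negative. If anything, the small subtlety worth flagging in the write-up is precisely this remark, so that the reader sees why possibly-negative "squared norms" cause no problem.
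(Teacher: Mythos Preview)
Your proof is correct and is essentially identical to the paper's own argument: both decompose $y - x_i$ around the weighted centroid $\bar x = \sum_j \lambda_j x_j$, expand via bilinearity, and observe that the cross term vanishes because $\sum_i \lambda_i(\bar x - x_i)=0$. Your remark that only bilinearity and $\sum_i \lambda_i = 1$ are used (so the pseudo-Euclidean case and even arbitrary real $\lambda_i$ pose no issue) is also in line with the paper's framing.
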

\begin{remark}
The other way of stating this lemma is that for a point $p \in M$ and a random point $X$ supported on $x_1,\dots,x_n$ we have 
$$\E \vert \vert y - X \vert \vert^2 = \vert \vert y - \E X \vert \vert^2  + \E \vert \vert X - \E X \vert \vert^2.$$
\end{remark}
\begin{proof}[Proof of Lemma \ref{L1}]
We start from the left-hand side and eventually transform
 it into the right-hand side:
$$\sum_{i = 1}^{n} \ll_i \vert \vert y - x_i \vert \vert^2  = \sum_{i = 1}^{n} \ll_i \< y - x_i , y - x_i\> = $$
$$ = \sum_{i = 1}^{n} \ll_i \< (y - \sum_{j = 1}^{n} \ll_j  x_j) - (x_i - \sum_{j = 1}^{n} \ll_j  x_j) , (y - \sum_{j = 1}^{n} \ll_j  x_j) - (x_i - \sum_{j = 1}^{n} \ll_j  x_j)\> .$$
By applying $\<u - v, u - v\> = \vert \vert u \vert \vert ^ 2 +  \vert \vert v \vert \vert ^ 2 - 2\< u, v \>$ to the above it rewrites as 
$$ \vert \vert y - \sum_{j = 1}^n \ll_j x_j \vert \vert^2 + \sum_{i = 1}^{n} \ll_i \vert \vert x_i - \sum_{j = 1}^{n} \ll_j  x_j  \vert \vert^2 - 2 \sum_{i = 1}^{n} \ll_i \< y - \sum_{j = 1}^{n} \ll_j  x_j  ,  x_i - \sum_{j = 1}^{n} \ll_j  x_j\>.$$
This is the right-hand side of our formula, except the last summand which is extra. But that's not a problem because it is zero, indeed:
$$\sum_{i = 1}^{n} \ll_i \< y - \sum_{j = 1}^{n} \ll_j  x_j  ,  x_i - \sum_{j = 1}^{n} \ll_j  x_j\> = $$
$$= \< y - \sum_{j = 1}^{n} \ll_j  x_j  ,  \sum_{i = 1}^{n} \ll_i ( x_i - \sum_{j = 1}^{n} \ll_j  x_j)\> = $$ 
$$= \< y - \sum_{j = 1}^{n} \ll_j  x_j  ,  0\> = 0.$$ 

\end{proof}

\begin{lemma} \label{L2}
Let $\ll_1,\dots,\ll_n \ge 0$ and $\nu_1,\dots,\nu_n \ge 0$ be such that $\sum_{i = 0}^{n}\ll_i = 1$, $\sum_{i = 0}^{n}\nu_i = 1$.
Denote $ \Lambda = (\ll_1,\dots,\ll_n)^T$, $\Nu = (\nu_1,\dots,\nu_n)^T$ then 
$$\Lambda^T D \Nu = \sum_{i,j = 1}^{n} \ll_i \nu_j \vert \vert x_i - x_j \vert \vert^2 = $$
$$ = 
\sum_{i = 1}^{n} \ll_i \vert \vert x_i - \sum_{j = 1}^{n} \ll_j  x_j  \vert \vert^2 +
\vert \vert \sum_{j = 1}^{n} \ll_j  x_j - \sum_{j = 1}^{n} \nu_j  x_j \vert \vert^2 + 
\sum_{i = 1}^{n} \nu_i \vert \vert \sum_{j = 1}^{n} \nu_j  x_j - x_i \vert \vert^2.$$
\end{lemma}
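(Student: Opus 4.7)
The first equality $\Lambda^T D \mathcal{V} = \sum_{i,j} \lambda_i \nu_j \|x_i - x_j\|^2$ is just the definition of matrix multiplication together with $D_{ij} = \|x_i - x_j\|^2$, so the real content is the second equality. My plan is to obtain it by applying Lemma \ref{L1} twice, essentially peeling off the $\nu$-barycenter first and then the $\lambda$-barycenter. Write $\bar x_\ll := \sum_j \ll_j x_j$ and $\bar x_\nu := \sum_j \nu_j x_j$ for brevity.

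\textbf{Step 1 (inner application).} For each fixed index $i$, apply Lemma \ref{L1} with the point $y = x_i$ and weights $\nu_1,\dots,\nu_n$. This gives
$$\sum_{j=1}^n \nu_j \|x_i - x_j\|^2 = \|x_i - \bar x_\nu\|^2 + \sum_{j=1}^n \nu_j \|x_j - \bar x_\nu\|^2.$$
Note that the last sum is the third term already appearing on the right-hand side of the lemma and is independent of $i$.

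\textbf{Step 2 (average over $i$).} Multiply the identity from Step 1 by $\ll_i$ and sum over $i$; using $\sum_i \ll_i = 1$ on the $i$-independent piece we get
$$\sum_{i,j=1}^n \ll_i \nu_j \|x_i - x_j\|^2 = \sum_{i=1}^n \ll_i \|x_i - \bar x_\nu\|^2 + \sum_{j=1}^n \nu_j \|x_j - \bar x_\nu\|^2.$$

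\textbf{Step 3 (outer application).} Apply Lemma \ref{L1} once more, this time with $y = \bar x_\nu$ and weights $\ll_1,\dots,\ll_n$, to split the first summand:
$$\sum_{i=1}^n \ll_i \|\bar x_\nu - x_i\|^2 = \|\bar x_\nu - \bar x_\ll\|^2 + \sum_{i=1}^n \ll_i \|x_i - \bar x_\ll\|^2.$$
Substituting this back into the identity of Step 2 yields exactly the three summands claimed in the lemma.

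No step is really an obstacle: the only thing to be careful about is that Lemma \ref{L1} requires the weights to be nonnegative and to sum to $1$, both of which are given in the hypothesis. The ``hardest'' conceptual point, if any, is recognizing that one should split the double sum asymmetrically — first collapse the $j$-sum toward the $\nu$-barycenter, then collapse the $i$-sum toward the $\ll$-barycenter — which produces the symmetric three-term decomposition (self-variance of $\ll$, distance between barycenters, self-variance of $\nu$).
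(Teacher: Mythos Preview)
Your proof is correct and follows essentially the same route as the paper's own argument: apply Lemma~\ref{L1} first with $y=x_i$ and weights $\nu_j$, average over $i$ with weights $\ll_i$, then apply Lemma~\ref{L1} once more with $y=\bar x_\nu$ and weights $\ll_i$. The only difference is cosmetic --- you introduce the shorthand $\bar x_\ll,\bar x_\nu$ and label the steps explicitly, which actually makes the structure a bit clearer.
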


\begin{remark}
This statement also can be presented via probability language: 
Let $X$ and $Y$ be two independent random variables supported on $x_1,\dots,x_n$. Then 
$$\E \vert \vert X - Y \vert \vert^2 = \E \vert \vert X - \E X \vert \vert^2  +  \vert \vert \E X - \E Y \vert \vert^2 + \E \vert \vert \E Y - Y \vert \vert^2.$$
\end{remark}

\begin{proof}[Proof of Lemma \ref{L2}]
We start from the left-hand side and apply Lemma \ref{L1}:
$$\sum_{i,j = 1}^{n} \ll_i \nu_j \vert \vert x_i - x_j \vert \vert^2 =
\sum_{i}^{n} \ll_i \sum_{j}^{n}  \nu_j \vert \vert x_i - x_j \vert \vert^2 = $$
$$
=\sum_{i}^{n} \ll_i \Big( \vert \vert x_i - \sum_{k = 1}^n \nu_k x_k \vert \vert^2 + \sum_{j = 1}^{n} \nu_j \vert \vert x_j - \sum_{k = 1}^{n} \nu_k  x_k  \vert \vert^2 \Big)
$$
$$
=\sum_{i}^{n} \ll_i \Big( \vert \vert x_i - \sum_{k = 1}^n \nu_k x_k \vert \vert^2  \Big) + \sum_{j = 1}^{n} \nu_j \vert \vert x_j - \sum_{k = 1}^{n} \nu_k  x_k  \vert \vert^2 .
$$
By applying Lemma 8 to the first summand, we get the right-hand side of the desired equality.
$$
\sum_{i}^{n} \ll_i  \vert \vert x_i - \sum_{k = 1}^n \ll_k x_k \vert \vert^2 + \vert \vert   \sum_{k = 1}^n \ll_k x_k - \sum_{k = 1}^n \nu_k x_k\vert \vert^2   + \sum_{j = 1}^{n} \nu_j \vert \vert x_j - \sum_{k = 1}^{n} \nu_j  x_k  \vert \vert^2.
$$
\end{proof}
\begin{proof}[Proof of Theorem \ref{Thm}]
Since $\bar u$ and $\bar v$ are generalized displacement vectors 
for $u$ and $v$, then there 
exist points $u_b$, $u_e = u_b + u$, $v_b$, $v_e = v_b + v$ with generalized absolute barycentric 
coordinates 
$$\ll_1, \dots, \ll_n,$$
$$\ll_1 + \bar u_1, \dots, \ll_n + \bar u_n,$$
$$\nu_1, \dots, \nu_n,$$
$$\nu_1 + \bar v_1, \dots, \nu_n + \bar v_n.$$

We denote 
$$\Lambda^b = (\ll_1, \dots, \ll_n)^T,$$ 
$$\Lambda^e = (\ll_1 + \bar u_1, \dots, \ll_n + \bar u_n)^T,$$
$$\Nu^b = (\nu_1, \dots, \nu_n)^T,$$ 
$$\Nu^e = (\nu_1 + \bar v_1, \dots, \nu_n + \bar v_n)^T,$$ and now 
$\Lambda^e = \Lambda^b + \bar u$, $\Nu^e = \Nu^b + \bar v$.
We start from the right-hand side of the desired equality and transform
it to the left-hand side:
$$\bar u^{T} \left(-\tfrac{1}{2}D\right)\bar u = 
(\Lambda^e - \Lambda^b)^T
\left(-\tfrac{1}{2}D\right)
(\Lambda^e - \Lambda^b) = $$
$$ = -\frac{1}{2}\big(\Lambda^e D \Nu^e + \Lambda^b D \Nu^b  -
\Lambda^e D \Nu^b - \Lambda^b D \Nu^e \big).$$
We apply Lemma \ref{L2} to each of the four summands. 
All the summands coming from the first and third parts 
of the right-hand side of Lemma \ref{L2} will cancel 
each other, and we get
$$ -\frac{1}{2}\big(\vert \vert u_e - v_e \vert \vert
 + \vert \vert u_b - v_b \vert \vert  -
\vert \vert u_e - v_b \vert \vert -
 \vert \vert u_b - v_e \vert \vert).$$
Now we apply $\vert \vert x - y \vert \vert =
\vert \vert x \vert \vert + 
\vert \vert y \vert \vert - 
2\< x , y\> $ to each of the four summands and get the left-hand side 
from Theorem \ref{Thm}.
$$\<u_e, v_e\> + \<u_b, v_b\> - \<u_e, v_b\>  - \<u_b, v_e\> = $$
$$\<u_e - u_b, v_e - v_b\> =  \< u, v\>.$$
\end{proof}
 
\section{Previous and similar results and possible applications} \label{RelApp}
\subsection{Previous results, guides for barycentric computations, and formula sheets.}
M. Schindler and E. Chen \cite{baryOlymp} provide a guide for barycentric 
computations in olympiad geometry. 
They only deal with $2$-dimensional case. 
There is no formula for the scalar product, but many of its corollaries
are given which they prove independently: condition for perpendicularity, length of a vector, circle equation, etc.
There is also a formula sheet for barycentric computations 
by G.Grozdev and D. Dekov \cite{grozdev2016barycentric}.

V. Volenec \cite{volenec2003metrical} gives a formula for the scalar 
product in the $2$-dimensional case. 
It is surprisingly different from what this paper provides.
V. Volenec also claims that 
his formula "implies all important formulas about metrical
relations of points and lines" and proceeds to recover those in the 
rest of the paper. 

It seems reasonable to follow Volenec's path 
and try to use Theorem \ref{Thm}
to generalize some of the results from \cite{baryOlymp, 
grozdev2016barycentric, volenec2003metrical} and maybe create some 
formula list for future reference. 
That being said if we want to deal 
with generalized barycentric coordinates and Pseudo-Euclidean spaces
and also provide details that can get busy for some formulas. 
So we are not going to do it in the present paper.

\subsection{Relationship between Gram matrix and distance matrix.}
Let $p = \frac{1}{n}\sum_{i=0}^{n}x_i$ and $G$ be a Gram matrix
 for vectors $x_1 - p, x_2 - p, \dots, x_n - p$, then 
$$
G = \left(I - \tfrac{1}{n}J\right)\left(-\tfrac{1}{2}D\right)\left(I - \tfrac{1}{n}J\right),
$$ 
where $J$ is an $n \times n$ matrix of all ones. 
This last formula is the key ingredient 
in the classical Multidimensional Scaling(MDS).
This formula can be seen as $n^2$ statements, 
one for each element of the Gram matrix.
Denote by $\mathbb{1}_n$ a vector of all ones.
Note that $(e_i -  \frac{1}{n}\mathbb{1}_n)$ is a generalized displacement vector for $(x_i - p)$. 
Thus one can see that those $n^2$ statements are precisely the 
formula from Theorem \ref{Thm} for $u,v \in \{x_1 - p, x_2 - p, \dots, x_n - p\}$.

\subsection{Negative type inequalities and embeddability 
in Euclidean spaces.}
Consider a metric space $Y = \{y_1,\dots,y_n\}$ with a distance 
matrix $D^Y$, i.e., $D^Y_{i,j} = d(y_i,y_j)$.
 $Y$ allows an isometric embedding into some Euclidean space iff 
 for every $\lambda \in \mathbb{R}^n$ such that 
 $\sum_{i = 1}^{n} \lambda_i = 0$ we have 
\begin{equation}\lambda^T D^Y \lambda \le 0. \label{NEG} \end{equation}
Those are called negative type inequalities, see Geometry of Cuts and Metrics
\cite{DL}.
  Let's identify points $y_1,\dots,y_n$ with some set of points
  in $\mathbb{R}^{n-1}$, which are in general position. 
  There exists a unique Pseudo-Euclidean scalar product on $\mathbb{R}^{n-1}$ 
  such that it agrees with the metric given by $D^Y$ 
  (for example, you can get it from Theorem \ref{Thm}).
  Theorem \ref{Thm} says that \eqref{NEG} condition 
  is a condition that the "squared length" 
  of each vector in $\mathbb{R}^{n-1}$ is non-negative.
  
\subsection{Sturm and Alexandrov spaces of non-negative curvature.}
Consider a metric space $Y = \{y_1,\dots,y_n\}$ with a distance 
matrix $D^Y$, i.e., $D^Y_{i,j} = d^2(y_i,y_j)$.
$Y$ has non-negative curvature in the sense of 
K.T. Sturm \cite{sturm1999metric} iff
 for every $\lambda \in \mathbb{R}^n$ such that 
  $\sum_{i = 1}^{n} \lambda_i = 0$, and only one of $\ll_i$ is negative 
  we have 
  \begin{equation}\lambda^T D^Y \lambda \le 0. \label{ST} \end{equation}
  Once again, let's identify $y_1,\dots,y_n$ with some set of points
  in $\mathbb{R}^{n-1}$, which are in general position. 
  And again, there exists a unique Pseudo-Euclidean 
  scalar product on $\mathbb{R}^{n-1}$, 
  such that it agrees with the metric given by $D^Y$.
Theorem \ref{Thm} says that $Y$ has non-negative curvature in sense of 
K.T. Sturm iff each vector going from a vertex to 
the opposite face has a non-negative "squared length". 

The above was previously noted by N. Lebedeva and A. Petrunin
 in the work \cite{lebedeva20225},
where they prove that $5$-point subsets of Alexandrov spaces
of non-negative 
curvature are exactly $5$-point Sturm spaces
 of non-negative curvature. See also 
\cite{petrunin2014quest}.

\subsection{Distance geometry and skipping realizations (embeddings).}
One natural application for distance formula in barycentric coordinates
comes from dealing with intrinsic triangulations 
 \cite{sharp2019navigating, gillespie2021integer,
 sharp2021geometry, sharp2021intrinsic}.
 In this setting, one has length of the edges of a triangle given to him but does not have 
a planar realization of this triangle readily available to him and interested 
in computing distances between points specified by their barycentric coordinates 
with respect to the triangle vertices. Although one can construct a planar realization,
compute barycenters and use the Pythagorean theorem, the distance formula in 
barycentric coordinates provides a much cleaner path. 

Distance geometry (see \cite{liberti2014euclidean}) is a field of 
geometry dealing with the setting described 
above. Namely, all the information available is given as some distances, 
and in particular, the Euclidean realization is not given. 
Since Theorem \ref{Thm}
only relies on distances, it may be useful in this field to 
do some geometry prior to or without constructing a realization. 

%One important algorithm for which we can skipping the realization is 
%$k$-means clustering. The only operation needed for $k$-means clustering 
%is computation of a distance to a barycenter. Which can be done without 
%constructing a realization. One can do it using Theorem \ref{Thm} but 
%Lemma \ref{L1} is enough. That being said we still need the points to come 
%from some Euclidean space otherwise algorithm can go forever.       

\subsection*{Acknowledgements}
I thank Nina Lebedeva for valuable discussions and for finding bugs in
an early version of this paper.  

\bibliography{circle}

\bibliographystyle{plain}

\end{document}